\documentclass[english]{article} 
\usepackage[utf8]{inputenc}
\usepackage[T1]{fontenc}
\usepackage{lmodern}
\usepackage[a4paper]{geometry}
\usepackage{fullpage}

\usepackage{babel}

\usepackage{amssymb,amsmath,mathtools,amsthm,dsfont,bm,xcolor,hyperref,xurl}

\usepackage[all]{xy}

\newtheorem{theorem}{Theorem}
\newtheorem{prop}[theorem]{Proposition}

\newtheorem{lemma}[theorem]{Lemma}

\theoremstyle{definition}

\newcommand{\E}{\mathbb{E}}

\renewcommand{\P}{\mathbb{P}}

\newcommand{\R}{\mathbb{R}}

\newcommand{\Z}{\mathbb{Z}}

\newcommand{\cG}{\mathcal{G}}

\renewcommand{\1}{\mathds{1}}

\renewcommand{\epsilon}{\varepsilon}
\renewcommand{\phi}{\varphi}

\DeclareMathOperator{\cov}{Cov}

\DeclareMathOperator{\off}{Off}

\title{On the proof of the Kozma-Nitzan conjecture}
\author{Jean-Baptiste Gouéré\footnote{Institut Denis Poisson, UMR-CNRS 7013, Université de Tours}}
\date{}

\begin{document}

\maketitle

\begin{abstract}
	In this short note, we give an exposition of the recent proof of the Kozma-Nitzan conjecture provided by OpenAI. 
\end{abstract}

\section{Setting and main results}

\paragraph{Setting.} We consider a finite graph $G$ with vertex set $V$.
Each edge $e$ is open with probability $0<p_e<1$ independently of the others.
We study the connectivity properties of the subgraph consisting of the open edges.
For $a, b \in V$, we denote by $\{a \leftrightarrow b\}$ the event "there is a path of open edges connecting $a$ and $b$" and by $\{a \not\leftrightarrow b\}$
the complementary event.
For $A, B \subset V$, we write $\{A \leftrightarrow B\}$ for the union over $a \in A$ and $b \in B$ of the events $\{a \leftrightarrow b\}$.
We use other similar notation whose meaning we hope is clear.
For $a \in V$, we denote its cluster by $C_a$.
This is the connected subgraph of $G$ whose vertices are those connected to $a$ by open paths and whose edges are all the open edges between these vertices.
In particular it always contain $a$.
For $A \subset V$ we write $C_A$ for the union of all $C_a$ for $a \in A$ (this is not necessarily connected).
When considering functions of clusters, we use the term cluster to mean any nonempty connected subgraph of $G$, without reference to a particular configuration of open and closed edges.
We will occasionally identify a cluster with its vertex set; for example, if $C$ is a cluster and $A \subset V$, we write $C \cap A \neq \emptyset$ to mean that $C$ has a vertex in $A$.

\paragraph{The conjecture of Kozma and Nitzan.} Our aim is to present the proof of the following result given in \cite{OpenAI_Z3}.

\begin{theorem} \label{t:conj_kozma_nitzan}
	Let $A$ be a nonempty subset of $V$.
	Let $s$ and $t$ be two vertices of $V$.
	Then
	\[
		\P[s \leftrightarrow A ; s \leftrightarrow t] \ge \P[s \leftrightarrow A] \min_{a \in A} \P[a \leftrightarrow t].
	\]
\end{theorem}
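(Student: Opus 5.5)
We argue by exploring the cluster of $s$ and conditioning on the random set through which $s$ reaches $A$. Write $\{s\leftrightarrow A\}$ as a disjoint union according to the value of a suitable ``boundary'' random set that is measurable with respect to a partial exploration. The natural choice is to explore from $A$ rather than from $s$. Let $D$ be the set of vertices connected to $A$ by open paths that avoid $s$; more precisely, run the exploration of $C_A$ but stop at $s$. On $\{s\leftrightarrow A\}$, $s$ is adjacent, through an open edge, to the explored region. Conditionally on the explored cluster $D=C$ (with its closed boundary edges revealed except possibly those touching $s$), the edges outside $C$ are still independent. What we need is a lower bound on $\P[s\leftrightarrow t \mid \text{exploration}]$ in terms of $\min_{a}\P[a\leftrightarrow t]$.

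The cleaner route is a \emph{two-point} decomposition. We write $f(x)=\P[x\leftrightarrow t]$ and aim for
\[
\P[s\leftrightarrow A;\, s\leftrightarrow t]\ \ge\ \E\bigl[\1_{s\leftrightarrow A}\, g\bigr],
\]
where $g$ is a conditional probability that dominates $\min_{a\in A} f(a)$. The steps are as follows.

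\begin{enumerate}
\item Condition on $C_s$. On $\{s\leftrightarrow A\}$, the event $s\leftrightarrow t$ is either already decided or fails, since $t\in C_s$ or not. This is too crude, so instead we condition on a smaller object: the cluster of $A$ in the graph with $s$'s incident edges removed, or symmetrically the set $K$ of vertices reachable from $s$ without passing through $A$.
\item Use the latter. Let $K$ be the set of vertices reachable from $s$ by open paths whose interior avoids $A$, and let $A'=K\cap A$, the entry points into $A$. Given $K$ and its closed edge boundary, $s\leftrightarrow t$ holds if $t\in K$, or if some $a\in A'$ connects to $t$ in the remaining graph $G\setminus K$ (with $a$ re-included).
\item Compare this conditional connection probability to the unconditioned $f(a)\ge \min_A f$. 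Removing $K$ lowers connection probabilities, so a correction must come from the event $t\in K$, or from paths from $a$ to $t$ that pass through $K$, which would then make $s\leftrightarrow t$ anyway. Concretely, for $a\in A'$ we have $\{a\leftrightarrow t\}\subset\{t\in K\}\cup\{a\leftrightarrow t \text{ off } K\}\cup\{\text{path } a\to t \text{ hits } K\}$, and any of these events forces $s\leftrightarrow t$ once we also know $s\leftrightarrow a$.
\item Obtain the pointwise conditional inequality
\[
\P[s\leftrightarrow t\mid K]\ \ge\ \P[a\leftrightarrow t]\qquad\text{for each } a\in A'.
\]
This would follow from a Harris/FKG argument: the conditional law of edges in $K$ together with its boundary is "stochastically favourable" to connections back into $K$.
\item Average over $K$ on $\{A'\neq\emptyset\}=\{s\leftrightarrow A\}$ to conclude.
\end{enumerate}

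The main obstacle is step 4. Conditioning on $K$ reveals closed boundary edges, which is negative information, so a naive FKG application fails. My first attempt would be a coupling: resample the configuration inside $K\cup\partial K$ independently and show that $a\leftrightarrow t$ in the resampled configuration implies $s\leftrightarrow t$ in the original one. This holds because any resampled path from $a$ to $t$ either avoids $K$'s edges, in which case it is also open originally, or first enters $K\cup\partial K$ at some vertex, which is connected to $s$ originally; the truncation at the \emph{last} exit from $K$ gives an originally open path from a vertex of $K$ to $t$. Making that last-exit argument rigorous, with care at boundary edges touching $A$ and at the fact that $K$ is only defined with paths avoiding $A$ in the interior, is where I expect the real work. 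If it fails, I would instead try an induction on the number of edges, conditioning on the state of one edge at $s$.
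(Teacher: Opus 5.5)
There is a genuine gap at step 4, and it cannot be repaired: the pointwise inequality is false. Take $G$ to be the triangle on $s,a,t$, each edge open with probability $p$, and $A=\{a\}$. The path $s,a,t$ is excluded from $K$ because its interior meets $A$, so the event $\{K=\{s,a\}\}$ is exactly $\{sa \text{ open},\ st \text{ closed}\}$. On this event $a\in A'$ and $\P[s\leftrightarrow t\mid K=\{s,a\}]=\P[at \text{ open}]=p$, whereas $\P[a\leftrightarrow t]=p+p^2-p^3>p$. This is the case $|A|=1$, where the theorem is just Harris--FKG, so the failure has nothing to do with the minimum over $A$.

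The same configuration defeats your coupling. Take $sa$ open and $st$, $at$ closed. Resampling the edges at $K$ may open $st$, which gives the path $a,s,t$ in the resampled configuration although $s\not\leftrightarrow t$ originally. Your last-exit argument breaks on the edge by which the resampled path leaves $K$ for the last time. That edge is a boundary edge of $K\setminus A$, so it is closed in the original configuration by the very definition of $K$.

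In general, on $\{t\notin K\}$ the conditional probability $\P[s\leftrightarrow t\mid K]$ is the probability that $A'$ is joined to $t$ by an open path avoiding $K\setminus A$. That is a connection probability in a smaller graph, and it can be strictly below $\min_{a\in A}\P[a\leftrightarrow t]$. The deficit is recovered only on average, from configurations with $t\in K$. So no inequality holding pointwise in $K$ can yield the theorem. Your fallback, induction on the edges at $s$, gives no indication of how the minimum over $A$ would propagate.

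What is missing is a genuinely averaged mechanism. The paper orders $A\cup\{s\}$ as $x_1,\dots,x_n$ with $x_n=s$ and proves
\[
\P[s\leftrightarrow A;\,s\leftrightarrow t]\ge\sum_{j<n}\Gamma_{n,j}\P[x_j\leftrightarrow t],
\]
where $\Gamma=I-H^{-1}$ has nonnegative entries and $\sum_{j<n}\Gamma_{n,j}=\P[s\leftrightarrow A]$. The weights $\Gamma_{n,j}$ are not conditional probabilities given any exploration. They come from inverting the matrix of ``first-point'' connection probabilities $\P[x_i\leftrightarrow x_j\mid x_j\not\leftrightarrow X_{j-1}]$.

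Both the inequality and the nonnegativity of $\Gamma$ follow from the cone property of Proposition~\ref{p:cov_cone}. That property is proved by downward induction on $r$, using three ingredients:
\begin{itemize}
\item the law of total covariance given $C_{X_{r-1}}$;
\item the van den Berg--H\"aggstr\"om--Kahn chain, which alternately resamples off $C_{x_r}$ and off $C_{X_{r-1}}$, and whose iterates telescope to $0$;
\item the fact that the auxiliary functions $F_j$ of Lemma~\ref{l:esp_cov_cond} are nonnegative and nondecreasing.
\end{itemize}
This global bookkeeping is what has to replace your step 4.
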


This is Conjecture 1 of \cite{Kozma_Nitzan}.
In \cite{Kozma_Nitzan}, Kozma and Nitzan showed that the above statement implies the absence of percolation at criticality for Bernoulli bond percolation on $\Z^d$, $d \ge 2$.
This was previously an important conjecture only solved in dimension $2$ \cite{Harris,Kesten} and in dimension $d \ge 11$ \cite{Fitzner_Hofstad}.
Kozma and Nitzan also provide variants of their conjecture, still implying non percolation at criticality, and they establish some of them in various special cases.
In \cite{Flammant}, Flammant introduces yet another variant, which also implies non percolation at criticality, and he proves it for planar graphs where $s$ and $A$ are on the boundary of the same face.

Theorem \ref{t:conj_kozma_nitzan} extends (by continuity) to the case where the $p_e$ take values in $[0,1]$.
It is also stated in \cite{OpenAI_Z3} for hypergraphs, without any change to the proof.
In this expository article, we have chosen to remain in the most elementary setting.

Theorem \ref{t:conj_kozma_nitzan} was proved shortly beforehand in \cite{Anthropic}.
We refer to Blanc-Renaudie's article \cite{Blanc-Renaudie} for a human exposition of the proof of \cite{Anthropic} and further references.
The proofs in \cite{Anthropic} and \cite{OpenAI_Z3} are very similar and rely on the same ideas.
However, certain aspects of the proof are formalized sufficiently differently that a human exposition does not seem entirely superfluous to us.

Finally, let us mention the claimed proof of the absence of percolation at criticality 
for Bernoulli bond percolation on infinite locally finite quasi-transitive graphs with critical probability less than one
\cite{OpenAI_quasitransitive}. We do not consider the content of \cite{OpenAI_quasitransitive} in these notes. 

\paragraph{A more general correlation result.} 
Theorem \ref{t:conj_kozma_nitzan} is proved as an elementary consequence of Theorem \ref{t:general}.
Before stating it, we need to introduce some notation.
Let $x_1, \dots, x_n$ be a list of distinct vertices of $G$. 
We set 
\[
X_r = \{x_1,\dots,x_r\}
\]
for $r \in \{1,\dots,n\}$ and $X_0=\emptyset$.
For each $j \in \{1,\dots,n\}$, we denote by $\bm{h_j}$ the column vector whose $i$-th entry, for $i \in \{1,\dots,n\}$, is given by
\[
	\P[x_i \leftrightarrow x_j | x_j \not\leftrightarrow X_{j-1}].
\]
Finally, we denote by $H$ the matrix in $GL_n(\R)$ whose columns are the $\bm{h_j}$.
In other words, for all $i,j$, we have
\[
	H_{i,j} = \P[x_i \leftrightarrow x_j | x_j \not\leftrightarrow X_{j-1}].
\]
This is a lower triangular matrix with diagonal entries equal to $1$. In particular, it is invertible.
Set 
\[
	\Gamma = I-H^{-1}.
\]
The matrix $\Gamma$ is lower triangular with zero diagonal.

\begin{theorem} \label{t:general}  
	Let $F$ be a nonnegative nondecreasing real-valued function from clusters of $G$.
	Then, for every $i,r \in \{1,\dots,n\}$,
	\begin{equation}\label{e:general_1}
		\E\big[F(C_{x_r}) ; x_i \leftrightarrow x_r \big| x_r \not\leftrightarrow X_{r-1}\big] \ge \sum_{j<i} \Gamma_{i,j} \E\big[F(C_{x_r}) ; x_j \leftrightarrow x_r \big| x_r \not\leftrightarrow X_{r-1} \big]
	\end{equation}
	and
	\begin{equation}\label{e:general_2}
		\E[F(C_{x_i})] \ge \sum_{j<i} \Gamma_{i,j} \E[F(C_{x_j})].
	\end{equation}
	Moreover, the entries of $\Gamma$ are nonnegative and, for every $i \in \{1,\dots,n\}$,
	\[
		\sum_{j<i} \Gamma_{i,j} = \P[x_i \leftrightarrow X_{i-1}].
	\]
\end{theorem}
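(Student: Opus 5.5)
The plan is to recast the statement in matrix form and then prove a positive decomposition of the relevant vectors. Note that $\Gamma = I - H^{-1}$, so inequality \eqref{e:general_1} for all $i$ says exactly that the vector $v^{(r)}$ with entries $v^{(r)}_i = \E[F(C_{x_r}); x_i \leftrightarrow x_r \mid x_r \not\leftrightarrow X_{r-1}]$ satisfies $H^{-1} v^{(r)} \ge 0$ entrywise. Equivalently, we want $v^{(r)} = H w$ for some $w \ge 0$. The same reformulation applies to \eqref{e:general_2}, with the vector $u_i = \E[F(C_{x_i})]$.

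\emph{The identity for the row sums.} I would do this first, since it is purely combinatorial. For fixed $i$, decompose the sure event according to the smallest index $j \le i$ such that $x_j \leftrightarrow x_i$. This event is $\{x_i \leftrightarrow x_j,\ x_j \not\leftrightarrow X_{j-1}\}$, and its probability is $H_{i,j}\,\P[x_j \not\leftrightarrow X_{j-1}]$. Hence $\sum_{j \le i} H_{i,j}\,\P[x_j \not\leftrightarrow X_{j-1}] = 1$, that is, $H^{-1}\mathbf 1$ is the vector $(\P[x_j \not\leftrightarrow X_{j-1}])_j$. Then $\Gamma \mathbf 1 = \mathbf 1 - H^{-1}\mathbf 1$ gives $\sum_{j<i}\Gamma_{i,j} = \P[x_i \leftrightarrow X_{i-1}]$.

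\emph{Deriving \eqref{e:general_2} from \eqref{e:general_1}.} The same minimal-index decomposition gives
\[
\E[F(C_{x_i})] = \sum_{r \le i} \P[x_r \not\leftrightarrow X_{r-1}]\, v^{(r)}_i ,
\]
because on that event $C_{x_i} = C_{x_r}$. The analogous formula holds for each $\E[F(C_{x_j})]$ with the same weights. So \eqref{e:general_2} is a nonnegative combination of the inequalities \eqref{e:general_1}. The only care needed is the terms with $r > j$, where $v^{(r)}_j$ should vanish or be handled by triangularity.

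\emph{The core step: proving \eqref{e:general_1}.} I would look for $w \ge 0$ with $v^{(r)} = Hw$ by decomposing the cluster of $x_r$ once more. Let $j$ be the smallest index with $x_j \in C_{x_r}$, which forces $j \ge r$ under the conditioning. On this event $C_{x_r} = C_{x_j}$ and $x_j \not\leftrightarrow X_{j-1}$. The natural candidate is $w_j$ equal to $\E[F(C_{x_r});\ j \text{ minimal}]$, suitably normalized. The required inequality $v^{(r)} \ge Hw$ (or an equality up to a nonnegative correction) then reduces to the following positive correlation statement under the conditioned measure $\P[\,\cdot \mid x_j \not\leftrightarrow X_{j-1}]$:
\[
\E[F(C_{x_j})\,\1_{x_r \in C_{x_j}}\,\1_{x_i \in C_{x_j}}] \ge \E[F(C_{x_j})\,\1_{x_r \in C_{x_j}}]\ \P[x_i \leftrightarrow x_j].
\]
Here both sides are taken under $\P[\,\cdot \mid x_j \not\leftrightarrow X_{j-1}]$, and the functions involved are nondecreasing functions of the cluster.

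\emph{Main obstacle.} The hard step is proving this Harris/FKG-type inequality for increasing functions of $C_{x_j}$ under the conditioning $x_j \not\leftrightarrow X_{j-1}$. The conditioning is on a decreasing event, and conditioning on the cluster of $X_{j-1}$ gives a mixture of product measures on $G \setminus C_{X_{j-1}}$, which does not preserve FKG automatically. My first attempt would be a direct exploration or coupling argument: reveal $C_{x_j}$ edge by edge and use that, given the explored part avoids $X_{j-1}$, the unexplored edges are still independent. Failing that, I would induct on the number of edges by conditioning on one edge at a time.

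\emph{Nonnegativity of $\Gamma$.} This does not follow from \eqref{e:general_1} with $F \equiv 1$, since that case is an equality. I would instead obtain it by induction on $i$ from the same decomposition, expressing $\Gamma_{i,j}$ through $\P[x_i \leftrightarrow x_j \mid x_j \not\leftrightarrow X_{j-1}]$ minus lower-order terms, and using the same correlation inequality with $F$ an indicator.
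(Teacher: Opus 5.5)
Your row-sum identity and your derivation of \eqref{e:general_2} from \eqref{e:general_1} are correct and match the paper's; the terms with $r>j$ vanish because $\E[F(C_{x_r}); x_j\leftrightarrow x_r \mid x_r\not\leftrightarrow X_{r-1}]=0$ for $j<r$. The gap is in the core step.

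\textbf{The decomposition is degenerate.} On $D_r=\{x_r\not\leftrightarrow X_{r-1}\}$ the point of $C_{x_r}$ with smallest index is always $x_r$. Your displayed inequality therefore reads $0\ge 0$ for $j>r$, since on $D_j$ one has $x_r\notin C_{x_j}$. All it asks for is $c_i:=\cov_{D_r}(\1_{x_i\leftrightarrow x_r},F(C_{x_r}))\ge 0$. This is the van den Berg--H\"aggstr\"om--Kahn (BHK) conditional positive-correlation inequality, which is known, so it need not be your main obstacle.

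\textbf{That inequality is too weak.} We have $v^{(r)}=\E_{D_r}[F(C_{x_r})]\,\bm{h_r}+c$, hence $H^{-1}v^{(r)}=\E_{D_r}[F(C_{x_r})]\,\bm{e_r}+c-\Gamma c$. So \eqref{e:general_1} is equivalent to $c_i\ge\sum_{r<j<i}\Gamma_{i,j}c_j$ for all $i>r$. Since $\Gamma$ has nonnegative, generally positive, entries below the diagonal, $c\ge 0$ does not imply this. Put differently, $v^{(r)}\ge Hw$ with $w\ge 0$ does not place $v^{(r)}$ in the cone $H\R_+^n$, because $H^{-1}=I-\Gamma$ is not entrywise nonnegative. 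You need $c$ to lie in the cone spanned by $\bm{h_{r+1}},\dots,\bm{h_n}$, not merely in the positive orthant.

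\textbf{What the paper does instead.} It argues by downward induction on $r$; the induction hypothesis is the cone statement itself for every nonnegative nondecreasing function. It splits $c$ by the law of total covariance given $C_{X_{r-1}}$.
\begin{enumerate}
\item The expected conditional covariance is expanded over the events ``$x_j$ is the first point of $C_{x_i}$'' for $j>r$. This is a decomposition of $C_{x_i}$, not of $C_{x_r}$. It equals $\sum_{j>r}\bm{h_j^{F_j}}$ for explicit nonnegative nondecreasing $F_j$ (checked by a coupling), so it lies in the cone by induction.
\item The covariance of the conditional expectations equals $\cov_{D_r}(\1_{x_i\leftrightarrow x_r},\cG_rF(C_{x_r}))$. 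Here $\cG_r$ is the BHK Markov operator, which preserves nonnegativity and monotonicity.
\end{enumerate}
Iterating, telescoping and using $\cG_r^tF\to\E_{D_r}[F(C_{x_r})]$ finishes the proof.

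\textbf{Nonnegativity of $\Gamma$.} The same issue affects your plan here. Contrary to your remark, it does follow from \eqref{e:general_1}. Take $r=j$ and $F(C)=\1_{C\cap\{x_{j+1},\dots,x_n\}\neq\emptyset}$; then $H^{-1}\bm{h_j^F}=(1-q)\bm{e_j}+q\,\Gamma\bm{e_j}$ with $q>0$. An entrywise correlation inequality alone gives no handle on the alternating expansion $\Gamma=N-N^2+N^3-\cdots$, where $N=H-I$.
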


The first part (that is, relation \eqref{e:general_1} for every sequence $(x_1,\dots,x_n)$, all $F$ and all $i,r$) is precisely Theorem 3 of \cite{Blanc-Renaudie}, stated in a different form.
We elaborate on this connection in the comments following Proposition \ref{p:cov_cone}. 
The rest of Theorem \ref{t:general} follows from \eqref{e:general_1}.
Theorem \ref{t:conj_kozma_nitzan} is obtained by combining \eqref{e:general_2} and the properties of $\Gamma$.
Thus, all the results stem from \eqref{e:general_1}.
The situation is similar in \cite{Blanc-Renaudie}: all its results follow from its Theorem 3.

\begin{proof}[Proof of Theorem \ref{t:conj_kozma_nitzan} from Theorem \ref{t:general}]
	If $s \in A$, the result is trivial.
	In what follows, we assume that this is not the case.
	Write $A \cup \{s\} = \{x_1,\dots,x_n\}$, where the $x_i$ are distinct and $x_n=s$.
	Let $F$ be the real-valued function from clusters of $G$  defined by 
	\[
		F(C) = \1_{C \cap A \neq \emptyset ; t \in C}.
	\]
	By \eqref{e:general_2} with $i=n$, we have
	\[
		\E[F(C_{x_n})] \ge \sum_{j<n} \Gamma_{n,j} \E[F(C_{x_j})]
	\]
	and hence, writing out the expectations explicitly, 
	\[
		\P[s \leftrightarrow A ; s \leftrightarrow t] \ge \sum_{j<n} \Gamma_{n,j} \P[x_j \leftrightarrow t].
	\]
	Using the nonnegativity of $\Gamma$ and the values of its row sums (see Theorem \ref{t:general}), we obtain
	successively
	\[
		\P[s \leftrightarrow A ; s \leftrightarrow t] 
		\ge  \sum_{j<n} \Gamma_{n,j} \min_{a \in A} \P[a \leftrightarrow t]
		= \P[s \leftrightarrow A] \min_{a \in A} \P[a \leftrightarrow t].
	\]
	This concludes the proof.
\end{proof}

\paragraph{Status of these notes.} 
This is not a research article. 
There is no claim of originality.
This is simply an exposition of a proof provided by an AI.

\section{Proof of Theorem \ref{t:general}}

For any $r \in \{1,\dots,n\}$ we consider the disconnection event
\[
	D_r = \{x_r \not\leftrightarrow X_{r-1}\},
\]
which has positive probability, 
and denote by $\E_{D_r}$, $\P_{D_r}$ and $\cov_{D_r}$ the associated conditional expectation, probability and covariance.

If $F$ is a real-valued function of clusters and if $r \in \{1,\dots,n\}$, 
we denote by $\bm{h_r^F}$ the column vector whose $i$-th entry, for $i \in \{1,\dots,n\}$, is given by
\[
	\E_{D_r}[F(C_{x_r}) ; x_i \leftrightarrow x_r].
\]
The first $r-1$ entries of this column vector are zero (because of the conditioning), and the next one is $\E_{D_r}[F(C_{x_r})]$.
Let $\alpha_1^F, \dots, \alpha_n^F$ be the coordinates of $\bm{h_r^F}$ in the basis formed by the $\bm{h_j}$. 
Recall that $\bm{h_j}$ denotes the column vector with entries $\P_{D_j}[x_i \leftrightarrow x_j]$,
that the first $j-1$ entries of the column vector $\bm{h_j}$ are zero, and that the next one equals $1$.
In view of these observations, we have
\[
	\alpha_1^F = \dots = \alpha_{r-1}^F = 0 \text{ and } \alpha_r^F = \E_{D_r}[F(C_{x_r})].
\]
Thus 
\[
	 \bm{h_r^F} = \E_{D_r}\big[ F(C_{x_r})\big] \bm{h_r} + \sum_{j=r+1}^n \alpha^F_j \bm{h_j}.
\]
Note that the $i$-th entry of 
\[
	\bm{h_r^F} - \E_{D_r}\big[ F(C_{x_r})\big] \bm{h_r} = \bm{h_r^{F-\E_{D_r}[ F(C_{x_r})]}}
\]
is the covariance under $\P_{D_r}$ of $F(C_{x_r})$ with $\1_{x_i \leftrightarrow x_r}$.
In the following proposition, we show that	all the $\alpha_j^F$ are nonnegative whenever $F$ is nonnegative and nondecreasing.
Thanks to the above remarks, this is equivalent to saying that all the $\alpha_j^F$, $j>r$, are nonnegative whenever $F$ is nondecreasing.
Let us give a formula for the coefficient $\alpha_j^F$.
Recall that the matrix $\Gamma$ is defined by 
\[
	\Gamma = I-H^{-1}.
\]
Let $\bm{\alpha^F}$ denote the column vector with entries $\alpha_j^F$.
By definition, we have $\bm{h_r^F} = H \bm{\alpha^F}$ and hence $\bm{\alpha^F} = H^{-1}\bm{h_r^F}$, giving
\begin{equation} \label{e:formule_alpha}
	\bm{\alpha^F} = \bm{h_r^F} - \Gamma \bm{h_r^F}.
\end{equation}

\begin{prop} \label{p:cov_cone}
	Let $F$ be a nonnegative nondecreasing real-valued function of clusters and let $r \in \{1,\dots,n\}$. 
	Then the coordinates $\alpha_j^F$ of $\bm{h_r^F}$ in the basis formed by the $\bm{h_j}$ are nonnegative.
	In other words, $\bm{h_r^F}   \ge \Gamma \bm{h_r^F}$, that is, for every $i \in \{1,\dots,n\}$,
	\[
		\E_{D_r}[F(C_{x_r}) ; x_i \leftrightarrow x_r] \ge \sum_{j<i} \Gamma_{i,j} \E_{D_r}[F(C_{x_r}) ; x_j \leftrightarrow x_r].
	\]
\end{prop}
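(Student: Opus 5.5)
We only need $\alpha_j^F\ge 0$ for $j>r$. Indeed, $\alpha_j^F=0$ for $j<r$ and $\alpha_r^F=\E_{D_r}[F(C_{x_r})]\ge 0$, as recalled before the statement. We argue by induction on the length $n$ of the list, with $r$ fixed. Since $H$ is lower triangular, $\bm{\alpha^F}=H^{-1}\bm{h_r^F}$ is obtained by forward substitution:
\[
\alpha_i^F=\E_{D_r}[F(C_{x_r});x_i\leftrightarrow x_r]-\sum_{j<i}\P_{D_j}[x_i\leftrightarrow x_j]\,\alpha_j^F .
\]
So $\alpha_1^F,\dots,\alpha_{n-1}^F$ depend only on $x_1,\dots,x_{n-1}$. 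By the induction hypothesis applied to the truncated list, they are nonnegative, and it remains to show $\alpha_n^F\ge 0$, i.e.
\[
\E_{D_r}[F(C_{x_r});x_n\leftrightarrow x_r]\ \ge\ \sum_{j<n}\alpha_j^F\,\P_{D_j}[x_n\leftrightarrow x_j]. \qquad (\ast)
\]
The base case is $n=r$, where there is nothing to prove.

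To prove $(\ast)$ I would explore the cluster of $x_n$. Write $K=C_{x_n}$ and condition on $K=k$, a fixed cluster containing $x_n$. The configuration outside the edges of $k$ and its edge boundary is then independent of this information and again Bernoulli percolation, on the graph $G_k$ obtained by deleting the vertices of $k$. Two facts follow:
\begin{itemize}
\item[(a)] $\P_{D_j}[x_n\leftrightarrow x_j]=\P[x_j\in K\mid D_j]$, and on $\{K=k\}$ the event $D_j\cap\{x_j\in K\}$ says $x_j$ is the smallest-indexed point of $X_j\cap k$.
\item[(b)] $\{x_n\leftrightarrow x_r\}\cap D_r$ says $x_r\in k$ and $k\cap X_{r-1}=\emptyset$, in which case $C_{x_r}=k$.
\end{itemize}
By (b), the left side of $(\ast)$ is $\sum_k F(k)\,\P[K=k]\,\1_{x_r=\min(k\cap X_n)}/\P[D_r]$, where the minimum refers to the order of the list. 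By (a), the right side regroups, according to $k$, as a sum over clusters with $x_n\in k$ of $\P[K=k]$ times a combination of the $\alpha_j^F/\P[D_j]$, over the single index $j$ with $x_j=\min(k\cap X_n)$.

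The heart of the matter is then to identify, for such $j$, the quantity $\alpha_j^F/\P[D_j]$ in a form comparable with $F(k)$ or with an average of $F$ over clusters. To do this I would apply the induction hypothesis not only to the original list but also to the list in the graph $G_k$, since the disconnection events behave well under deletion of $k$. Combined with the Harris--FKG inequality, this should yield $(\ast)$. The point is that $F$ is nondecreasing and, on $G_k$, events such as $\{x_r\not\leftrightarrow X_{r-1}\}$ are decreasing.

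I expect the main obstacle to be exactly this step: finding the right expression for $\alpha_j^F$, presumably as a conditional covariance of $F(C_{x_r})$ with a decreasing disconnection event under $\P_{D_r}$ restricted to a smaller graph, whose sign is then given by FKG. My first attempt would be to guess the formula
\[
\alpha_j^F=\E_{D_r}\big[F(C_{x_r})\,;\,x_j\leftrightarrow x_r,\ x_j\text{ is the last element of }X_n\cap C_{x_r}\big]\ \text{minus a correction},
\]
verify it against the forward substitution recursion using (a) and (b), and show that the correction is a nonpositive covariance.
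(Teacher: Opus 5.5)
There is a genuine gap. The proposal stops at the step that carries all of the content, and the tools you plan to use cannot supply it.

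Facts (a) and (b) are correct, but the regrouping only rewrites $(\ast)$. Isolating the $j=r$ term, with $\alpha_r^F=\E_{D_r}[F(C_{x_r})]$, turns $(\ast)$ into
\[
\cov_{D_r}\big(F(C_{x_r}),\1_{x_n\leftrightarrow x_r}\big)\ \ge\ \sum_{j=r+1}^{n-1}\alpha_j^F\,\P_{D_j}[x_n\leftrightarrow x_j],
\]
which is just $\alpha_n^F\ge0$ again. The exploration of $C_{x_n}$ adds no information.

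The induction on $n$ with $F$ fixed is also empty. Proving $\alpha_n^F\ge0$ for all lists is equivalent to the proposition. Moreover, the hypothesis $\alpha_j^F\ge0$ for $j<n$ points the wrong way, since you need an upper bound on the right-hand side.

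Harris--FKG cannot close the gap either. For $r=n-1$ the sum above is empty, and $(\ast)$ says that increasing functions of $C_{x_r}$ are positively correlated under conditioning on the decreasing event $D_r$. That is the van den Berg--H\"aggstr\"om--Kahn inequality. Conditioning on a decreasing event is exactly what breaks FKG, which only gives $\E[F(C_{x_r})\,|\,D_r]\le\E[F(C_{x_r})]$.

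Suppose you condition on $C_{X_{r-1}}$ and use total covariance. The conditional covariance is handled by Harris on $G$ with $C_{X_{r-1}}$ removed. The other term, however, is a covariance of two decreasing functions of $C_{X_{r-1}}$ conditioned on $x_r\notin C_{X_{r-1}}$: the same problem with the roles exchanged. Your guessed formula for $\alpha_j^F$ is left unverified and does not resolve this.

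The paper breaks the circularity in two steps.

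\begin{enumerate}
\item By Lemma \ref{l:cov_esp_cond}, the covariance of the conditional expectations equals $\cov_{D_r}(\1_{\cdot\leftrightarrow x_r},\cG_rF(C_{x_r}))$. Here $\cG_r$ is the BHK Markov operator, built from alternating resamplings off $C_{x_r}$ and off $C_{X_{r-1}}$. It preserves nonnegative nondecreasing functions and satisfies $\cG_r^tF\to\E_{D_r}[F(C_{x_r})]$. Iterating the total covariance formula therefore telescopes, leaving only a sum of expected conditional covariances.
\item Nonnegativity of those terms is not enough, because you need the whole vector $\bm{h_r^F}$ in the cone spanned by the $\bm{h_k}$. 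Lemma \ref{l:esp_cov_cond} supplies this, via the first-point events $A(i,j)$ and a coupling argument. It shows that each expected conditional covariance equals $\sum_{j>r}\E_{D_j}[\1_{\cdot\leftrightarrow x_j}F_j(C_{x_j})]$, with every $F_j$ nonnegative and nondecreasing.
\end{enumerate}

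This is why the induction must run downward in $r$, applied to the new functions $F_j$ at larger indices, rather than upward in $n$ with $F$ fixed.
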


We prove this proposition in Section \ref{s:preuve_prop}.
Let us just note here that the second part is an immediate rewriting of the first part, using \eqref{e:formule_alpha} and 
the known fact that $\Gamma$ is lower triangular with zero diagonal.
The key fact is thus the nonnegativity of the coordinates $\alpha_j^F$.
This is actually equivalent to Theorem 3 in \cite{Blanc-Renaudie}, which is also the key in \cite{Blanc-Renaudie}.
Let us elaborate on this point as the notation is quite different.
As explained above Proposition \ref{p:cov_cone}, the result can equivalently be stated as follows.
For any nondecreasing $F$, any sequence of distinct vertices $(x_1,\dots,x_n)$ and any $r \in \{1,\dots,n\}$, 
the unique $\alpha_{r+1}^F, \dots, \alpha_n^F$ satisfying
\[
	\bm{h_r^{F-\E_{D_r}[ F(C_{x_r})]}} = \sum_{j=r+1}^n \alpha^F_j \bm{h_j}
\]
are nonnegative.
As the matrix $H$ with columns $\bm{h_j}$ is lower-triangular with diagonal entries equal to $1$, 
we can easily solve the equation step by step by  successively computing $\alpha^F_{r+1}, \dots, \alpha^F_n$. 
It appears that each step involves one of the operators $\pi$ of \cite{Blanc-Renaudie}.
In the end, for $j>r$, the coefficient $\alpha_j^F$ satisfies, with the notation of \cite{Blanc-Renaudie},
\[
	\alpha_j^F = \pi_{X_r,(x_{r+1},\dots,x_{j-1})} ( \kappa_{x_r,X_{r-1}}F)(x_j).
\]
We thus recover the statement of Theorem 3 in \cite{Blanc-Renaudie}.

In the remainder of this section, we show how Proposition \ref{p:cov_cone} implies Theorem~\ref{t:general}.
First, Proposition \ref{p:cov_cone} immediately gives \eqref{e:general_1}.
The inequality \eqref{e:general_2} is easily deduced.
We encapsulate it in the following lemma.

\begin{lemma} \label{l:inegalite_2}  
	Let $F$ be a nonnegative nondecreasing read-valued function from clusters of $G$.
	Then, for every $i \in \{1,\dots,n\}$,
	\[
		\E[F(C_{x_i})] \ge \sum_{j<i} \Gamma_{i,j} \E[F(C_{x_j})].
	\]
\end{lemma}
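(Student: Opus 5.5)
The plan is to write the vector $\bm{v}$ with entries $v_i = \E[F(C_{x_i})]$ as a nonnegative linear combination of the vectors $\bm{h_r^F}$, $r\in\{1,\dots,n\}$. Then apply Proposition \ref{p:cov_cone} to each of them and sum.

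\textbf{Decomposition.} Fix $i$. On every configuration, the cluster $C_{x_i}$ contains $x_i$, hence contains at least one element of $X_i$. Let $r$ be the smallest index such that $x_r \in C_{x_i}$. Then the event $\{r \text{ is this minimal index}\}$ is exactly $\{x_i \leftrightarrow x_r\}\cap D_r$. As $r$ ranges over $\{1,\dots,n\}$, these events form a partition of the probability space; for $r>i$ they are empty because $x_i\in X_{r-1}$. On this event, $C_{x_i}=C_{x_r}$, so
\[
	\E[F(C_{x_i})] = \sum_{r=1}^n \E[F(C_{x_r}) ; x_i \leftrightarrow x_r ; D_r] = \sum_{r=1}^n \P[D_r]\, \E_{D_r}[F(C_{x_r}) ; x_i\leftrightarrow x_r] = \sum_{r=1}^n \P[D_r]\,(\bm{h_r^F})_i .
\]
The same identity holds for every index, in particular for each $j<i$. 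Thus
\[
	\bm{v} = \sum_{r=1}^n \P[D_r]\, \bm{h_r^F},
\]
with nonnegative weights $\P[D_r]$. The weights do not depend on $i$, which is the point: the same combination works for all coordinates simultaneously.

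\textbf{Conclusion.} By Proposition \ref{p:cov_cone}, $\bm{h_r^F} \ge \Gamma \bm{h_r^F}$ entrywise for each $r$. Multiplying by $\P[D_r]\ge 0$ and summing over $r$, linearity of $\bm{w}\mapsto \bm{w}-\Gamma\bm{w}$ gives $\bm{v} \ge \Gamma \bm{v}$ entrywise. Since $\Gamma$ is lower triangular with zero diagonal, the $i$-th coordinate of this inequality reads
\[
	\E[F(C_{x_i})] \ge \sum_{j<i}\Gamma_{i,j}\E[F(C_{x_j})],
\]
which is the claim.

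The only step requiring care is the partition argument, namely checking that the events $\{x_i\leftrightarrow x_r\}\cap D_r$ are disjoint and exhaust the space. Once that is verified, everything else is linear algebra. I expect no real obstacle here, since all the difficulty is contained in Proposition \ref{p:cov_cone}.
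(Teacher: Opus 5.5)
Your proof is correct and follows the paper's argument: you decompose according to the first point $x_r$ of $C_{x_i}$ to get $\E[F(C_{x_i})]=\sum_r \P[D_r]\,\E_{D_r}[F(C_{x_r}); x_i\leftrightarrow x_r]$, then combine the inequalities of Proposition~\ref{p:cov_cone} with the nonnegative weights $\P[D_r]$. Your remark that these weights do not depend on $i$, so that one combination serves every coordinate, is the linearity step the paper leaves implicit.
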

\begin{proof}
	For each index $i$, decomposing according to the first point $x_r$ in the cluster of $x_i$, we have
	\begin{align*}
		\E[F(C_{x_i})] 
			& = \sum_{r=1}^n \E\big[ F(C_{x_i}) ; x_i \leftrightarrow x_r ; x_r \not\leftrightarrow X_{r-1}\big] \\
			& = \sum_{r=1}^n \E\big[ F(C_{x_r}) ; x_i \leftrightarrow x_r ; x_r \not\leftrightarrow X_{r-1}\big] \\
			& = \sum_{r=1}^n \P[D_r]\E_{D_r}\big[ F(C_{x_r}) ; x_i \leftrightarrow x_r \big].
	\end{align*}
	The result then follows from Proposition \ref{p:cov_cone}.
\end{proof}

The following lemma constitutes the last part of Theorem \ref{t:general}.
The first assertion is elementary. 
The second assertion again follows from Proposition \ref{p:cov_cone}.

\begin{lemma} \label{l:gamma}
	\begin{enumerate}
		\item The sum of the entries in the $i$-th row of $\Gamma$ equals $\P[x_i \leftrightarrow X_{i-1}]$.		
		\item The entries of $\Gamma$ are nonnegative.
	\end{enumerate}
\end{lemma}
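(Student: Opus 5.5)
For the first assertion, the plan is to express the row sums of $\Gamma$ through $H^{-1}$ applied to a suitable vector. Let $\bm{1}$ denote the all-ones column vector. Row sums of $\Gamma=I-H^{-1}$ equal $1$ minus row sums of $H^{-1}$, i.e. the entries of $\bm{1}-H^{-1}\bm{1}$. So it suffices to identify the vector $\bm{u}=H^{-1}\bm{1}$, i.e. to solve $H\bm{u}=\bm{1}$. I claim $u_j=\P[D_j]=\P[x_j\not\leftrightarrow X_{j-1}]$. Indeed the $i$-th entry of $H\bm{u}$ is
\[
\sum_{j} \P[x_i\leftrightarrow x_j ; x_j\not\leftrightarrow X_{j-1}],
\]
and the events $\{x_i\leftrightarrow x_j ; x_j\not\leftrightarrow X_{j-1}\}$, $j\le i$, partition the sample space: $x_j$ is the first vertex of the list in $C_{x_i}$, which exists since $x_i\in C_{x_i}$. (This is the decomposition already used in the proof of Lemma \ref{l:inegalite_2}, with $F\equiv 1$.) Terms with $j>i$ vanish by lower triangularity. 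Hence $H\bm{u}=\bm{1}$, and the $i$-th row sum of $\Gamma$ is $1-\P[D_i]=\P[x_i\leftrightarrow X_{i-1}]$.

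For the second assertion, I would apply Proposition \ref{p:cov_cone} to a well-chosen nonnegative nondecreasing $F$ and $r$ so that $\bm{\alpha^F}$ reads off a column of $-H^{-1}$, hence a column of $\Gamma$ off the diagonal. To get $\Gamma_{i,j}\ge0$ for $i>j$, fix $j$ and use $r=j$ with $F\equiv1$ on the reordered list. The cleanest route: with $F\equiv 1$ and general $r$, $\bm{h_r^F}=\bm{h_r}$, so $\bm{\alpha}=e_r$, which gives nothing. So instead I'd take $F(C)=\1_{C\cap X_{j}\neq\emptyset}$-type indicators, or better, use Lemma \ref{l:inegalite_2} inductively. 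Concretely, write $\bm{v}=-H^{-1}e_j$ restricted below $j$; these are the entries $\Gamma_{i,j}$, $i>j$. Since $He_j$-column is $\bm{h_j}$, one expresses $e_j$ as $\bm{h_j}-\sum_{k>j}H_{k,j}e_k$ and recursively $e_k$ in terms of $\bm{h}$'s. The nonnegativity should come from choosing $r=j$ and $F(C)=\1_{C\cap X_{j}\neq\emptyset}$ restricted so that under $D_j$...

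The main obstacle is exactly finding the right $F$ (and list) so that $\bm{h^F_r}$ is a standard basis vector times a positive constant, making Proposition \ref{p:cov_cone} yield a column of $H^{-1}$ up to sign. My first attempt: fix $j<i$, reorder so that we consider the list truncated at $x_i$, and take $r=j$, $F(C)=\1_{x_i\notin C}$ is decreasing, so instead use $F(C)=\1_{\{x_{j+1},\dots,x_{i-1}\}\cap C\neq\emptyset \text{ or } x_i\in C}$ and compare; the identity $\alpha^F=h^F-\Gamma h^F$ then should isolate $\Gamma_{i,j}$ with a nonnegative coefficient. If this fails, I would fall back on proving $-(H^{-1})_{i,j}\ge0$ directly by induction on $i-j$ using the recursion $(H^{-1})_{i,j}=-\sum_{j\le k<i}H_{i,k}(H^{-1})_{k,j}$ combined with \eqref{e:general_1} applied with $F\equiv1$ to the sublist starting at $x_j$.
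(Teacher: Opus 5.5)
Your first assertion is proved exactly as in the paper. The first-point decomposition gives $H\bm P=\bm 1$ with $P_j=\P[D_j]$, hence $\Gamma\bm 1=\bm 1-\bm P$.

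For the second assertion there is a genuine gap. You list candidate functions but never carry out the computation for any of them, and you leave open whether your best candidate works (``should isolate'', ``if this fails''). That candidate is $r=j$ and $F(C)=\1_{C\cap\{x_{j+1},\dots,x_i\}\neq\emptyset}$ on the list truncated at $x_i$. The truncation is harmless, since the top-left block of the lower-triangular $H$ depends only on $x_1,\dots,x_i$. This candidate is in fact the paper's choice; the paper takes $F(C)=\1_{C\cap\{x_{j+1},\dots,x_n\}\neq\emptyset}$ on the full list, so no truncation is needed. The missing step is the following:
\begin{itemize}
\item For $k>j$, the event $\{x_k\leftrightarrow x_j\}$ forces $F(C_{x_j})=1$, so the $k$-th entry of $\bm{h_j^F}$ is exactly $H_{k,j}$.
\item The entries with $k<j$ vanish.
\item The $j$-th entry is $1-q$, where $q=\P_{D_j}[x_j\not\leftrightarrow\{x_{j+1},\dots,x_n\}]$.
\end{itemize}
Thus $\bm{h_j^F}=\bm{h_j}-q\bm{e_j}$. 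Since $H^{-1}\bm{h_j}=\bm{e_j}$ and $H^{-1}=I-\Gamma$,
\[
\bm{\alpha^F}=H^{-1}\bm{h_j^F}=(1-q)\bm{e_j}+q\,\Gamma\bm{e_j},
\]
so Proposition~\ref{p:cov_cone} yields $q\,\Gamma_{i,j}\ge 0$ for $i>j$. You then need $q>0$ \emph{strictly}; a ``nonnegative coefficient'', as you wrote, would prove nothing. Strict positivity holds because the event that all edges at $x_j$ are closed has positive probability (as $p_e<1$) and is contained in $D_j\cap\{x_j\not\leftrightarrow\{x_{j+1},\dots,x_n\}\}$. The mechanism is that $F$ differs from $1$ only when $C_{x_j}$ avoids all later points. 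This perturbs $\bm{h_j}$ only along $\bm{e_j}$, and $H^{-1}$ turns that perturbation into the $j$-th column of $\Gamma$.

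Your fallback would not work. Applied with $F\equiv 1$, \eqref{e:general_1} reads $H\ge\Gamma H$ entrywise, i.e.\ $(I-\Gamma)H=I\ge 0$. This is a tautology and carries no information on the sign of $\Gamma_{i,j}$. The recursion alone cannot decide the sign either: with the induction hypothesis it gives $(H^{-1})_{i,j}=-H_{i,j}+\sum_{j<k<i}\bigl(-H_{i,k}(H^{-1})_{k,j}\bigr)$, a nonpositive term plus nonnegative terms. Likewise, taking $r=j$ with $F\equiv 1$, or with $F(C)=\1_{C\cap X_j\neq\emptyset}$ (which is identically $1$ on clusters containing $x_j$), only gives $\bm{\alpha^F}=\bm{e_j}$.
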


\begin{proof}
	Let us prove the first assertion. For every index $i$, we have 
	\[
		1 = \sum_{j=1}^n \P[x_i \leftrightarrow x_j ; x_j \not\leftrightarrow X_{j-1}] = \sum_{j=1}^n  \P_{D_j}[x_i \leftrightarrow x_j]\P[D_j].
	\]
	In matrix form, this reads $\bm 1 = H \bm P$, where $\bm 1$ denotes the column vector whose entries are all $1$ 
	and $\bm P$ is the column vector with entries $\P[D_j]=\P[x_j \not\leftrightarrow X_{j-1}]$.
	Multiplying by $H^{-1} = I - \Gamma$ gives $\bm P = (I-\Gamma)\bm 1$ and then $\Gamma \bm 1 = \bm 1- \bm P$.
	This proves the first assertion.
	
	We now show that the entries of $\Gamma$ are nonnegative.
	Fix an index $j$ and consider the real-valued function of clusters defined by $F(C) = \1_{C \cap \{x_{j+1},\dots,x_n\} \neq \emptyset}$ so that
	\[
		F(C_{x_j}) = \1_{x_j \leftrightarrow \{x_{j+1},\dots,x_n\}}.
	\]
	Then, 
	\[
		\bm{h_j^F} = \bm{h_j} - q \bm{e_j}
	\]
	where
	\[
		q = \P_{D_j}\big[x_j \not\leftrightarrow \{x_{j+1},\dots,x_n\}\big]>0
	\]
	and where $\bm{e_j}$ is the $j$-th vector of the canonical basis.
	By Proposition \ref{p:cov_cone}, recalling that $I-\Gamma=H^{-1}$, we obtain
	\[
		\bm{0} \le \bm{h_j^F} - \Gamma \bm{h_j^F} = (I-\Gamma)\bm{h_j} -q \bm{e_j} + \Gamma q\bm{e_j} = (1-q)\bm{e_j} + \Gamma q\bm{e_j}.
	\]
	For any index $i>j$, considering the $i$-th entry, we get $\Gamma_{i,j} \ge 0$.
	For any index $i \le j$ we already know $\Gamma_{i,j}=0$.
\end{proof}

\section{Proof of Proposition \ref{p:cov_cone}}

\label{s:preuve_prop}

For all cluster or union of clusters $C$, we define $\P_{\off C}$ as the probability under which all edges incident to at least one vertex of $C$ are closed,
and all other edges retain their original product law.
We denote by $\E_{\off C}$ and $\cov_{\off C}$ the associated expectation and covariance.
For example, for any real-valued function $F$ defined on clusters,
\begin{equation}\label{e:ex_off}
	\E_{D_r}\big[ F(C_{x_r}) | C_{X_{r-1}}\big] = \E_{\off C_{X_{r-1}}}\big[F(C_{x_r})].
\end{equation}
This follows from the fact that, by independence, exploring the cluster of $X_{r-1}$ gives no information on unrevealed edges.
Therefore, under $\P_{D_r}$, conditional on $C_{X_{r-1}}$, the states of edges which are not incident to $C_{X_{r-1}}$ are independent with their initial Bernoulli parameter, and the edges with only one vertex in $C_{X_{r-1}}$ are closed.
But the states of those edges have the same distribution under $\P_{\off C_{X_{r-1}}}$, and $C_{x_r}$ depends only on the state of those edges.

\begin{lemma} \label{l:esp_cov_cond}
	Let $r \in \{1,\dots,n\}$ and let $F$ be a nonnegative nondecreasing function of clusters.
	Then, for every $i \in \{1,\dots,n\}$,
	\begin{equation} \label{e:esp_cov_cond1}
		\E_{D_r}\Big[\cov_{D_r}\big(\1_{x_i \leftrightarrow x_r}, F(C_{x_r}) \big| C_{X_{r-1}}\big)\Big]  
		= \sum_{j=r+1}^n \E_{D_j} \big[\1_{x_i \leftrightarrow x_j} F_j(C_{x_j})  \big]
	\end{equation}
	where, for each $j>r$, $F_j$ is the function of clusters defined by 
	\begin{align}
		F_j(C) 
		& = \frac{\P[x_j \not\leftrightarrow X_{j-1}]}{\P[x_r \not\leftrightarrow X_{r-1}]}  
		\E_{\off C}\Big[\1_{x_r \not\leftrightarrow X_{r-1}} \big(\E_{\off C_{X_{r-1}}}\big[F(C_{x_r})\big] - F(C_{x_r}) \big) \Big].  \label{e:esp_cov_cond3}
	\end{align}
	Moreover, each $F_j$ is nonnegative and nondecreasing.
\end{lemma}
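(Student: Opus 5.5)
The plan is to compute the left-hand side of \eqref{e:esp_cov_cond1} by two successive conditionings, first on $C_{X_{r-1}}$ and then on the cluster of the first $x_j$ that $x_i$ is connected to. Nonnegativity and monotonicity of $F_j$ will come from a monotone coupling. Throughout, write $K=C_{X_{r-1}}$ and $m(K)=\E_{\off K}[F(C_{x_r})]$.

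\emph{Step 1 (rewriting the conditional covariance).} By \eqref{e:ex_off}, conditional on $K$, the law of $C_{x_r}$ under $\P_{D_r}$ is its law under $\P_{\off K}$. The conditional covariance is therefore
\[
\E_{D_r}\big[\1_{x_i\leftrightarrow x_r}(F(C_{x_r})-m(K))\,\big|\,K\big]
=\E_{D_r}\big[\1_{x_i\not\leftrightarrow x_r}(m(K)-F(C_{x_r}))\,\big|\,K\big],
\]
because the conditional mean of $F(C_{x_r})-m(K)$ given $K$ is zero. Taking expectations and removing the conditioning on $D_r$ gives
\[
\text{LHS}=\frac{1}{\P[D_r]}\,\E\big[\1_{D_r}\1_{x_i\not\leftrightarrow x_r}\big(m(K)-F(C_{x_r})\big)\big].
\]

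\emph{Step 2 (decomposing by the first point of the cluster of $x_i$).} Let $x_j$ be the first point of $X_n$ lying in $C_{x_i}$; on $D_r\cap\{x_i\not\leftrightarrow x_r\}$ we have $j\neq r$.
\begin{itemize}
\item If $j<r$, then $x_i\in K$, so the event $\{x_i\in K\}$ is $K$-measurable. Conditionally on $K$ and $D_r$, the factor $m(K)-F(C_{x_r})$ has mean zero, so these terms vanish.
\item If $j>r$, the event is $D_j\cap\{x_i\leftrightarrow x_j\}$. I condition on $C_{x_j}=C$. Since $x_j\not\leftrightarrow X_{j-1}\supset X_r$, the quantities $D_r$, $K$ and $C_{x_r}$ are functions of the edges not incident to $C$. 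Given $C_{x_j}=C$, these edges have the law $\P_{\off C}$; this is the same exploration argument as for \eqref{e:ex_off}.
\end{itemize}
In the second case the contribution is therefore
\[
\frac{\P[D_j]}{\P[D_r]}\,\E_{D_j}\Big[\1_{x_i\leftrightarrow x_j}\,\E_{\off C_{x_j}}\big[\1_{D_r}(m(K)-F(C_{x_r}))\big]\Big],
\]
which is exactly the $j$-th term with $F_j$ as in \eqref{e:esp_cov_cond3}.

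\emph{Step 3 (nonnegativity).} Under $\P_{\off C}$, conditionally on $K$ and $D_r$, the cluster $C_{x_r}$ has law $\P_{\off (K\cup C)}$. Hence
\[
F_j(C)=c\,\E_{\off C}\Big[\1_{D_r}\big(\E_{\off K}F(C_{x_r})-\E_{\off(K\cup C)}F(C_{x_r})\big)\Big],\qquad c=\frac{\P[D_j]}{\P[D_r]}>0.
\]
Closing more edges gives a smaller cluster under the obvious coupling, and $F$ is nondecreasing. So each bracket is nonnegative, and therefore $F_j\ge 0$.

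\emph{Step 4 (monotonicity in $C$).} This is the main obstacle. Enlarging $C$ to $C'$ has three simultaneous effects: the inner term $\E_{\off(K\cup C)}F$ decreases (good); the weight $\1_{D_r}$ becomes more likely (harmless, since the bracket is nonnegative); but $K$ itself shrinks, which lowers $\E_{\off K}F$. I would first try to rewrite $F_j(C)$ as a sum over the possible values of $K$. For $K$ avoiding $C$, one has $\P_{\off C}[C_{X_{r-1}}=K]=\P[C_{X_{r-1}}=K]/\P[\text{edges between }K\text{ and }C\text{ closed}]$, with some care needed for points of $X_{r-1}$ lying in $C$, which become singleton clusters. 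I would then compare $C$ with $C'=C\cup\{v\}$ edge by edge: pair each $K$ arising under $\off C$ with the smaller $K'$ it becomes after the extra edges are closed. The aim is to show that the loss in $m(K)$ is compensated by the gain in $m(K)-\E_{\off(K\cup C)}F$. The guiding identity is $m(K')-\E_{\off(K'\cup C')}F\ge m(K)-\E_{\off(K\cup C)}F$ whenever $K'\cup C'\supseteq K\cup C$ and $K'\subseteq K$. This holds for instance because both differences can be realized as expectations of $F(\text{big cluster})-F(\text{small cluster})$ in a single monotone coupling, and $K'\cup C'$ contains the removed part of $K$.
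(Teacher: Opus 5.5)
Your Steps 1--3 are sound. The first-point decomposition and the conditioning on $C_{x_j}$ yield \eqref{e:esp_cov_cond1} with exactly the $F_j$ of \eqref{e:esp_cov_cond3}, as in the paper. Your nonnegativity argument (conditioning on $K$ and comparing $\E_{\off K}$ with $\E_{\off(K\cup C)}$) is a correct variant of the paper's.

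The gap is Step~4. Monotonicity of $F_j$ is part of the statement, and it is what the induction in Proposition~\ref{p:cov_cone} uses, but you give only a plan, and the plan fails.

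First, the diagnosis is off. Shrinking $K$ \emph{raises} $m(K)=\E_{\off K}[F(C_{x_r})]$, since fewer edges are closed, so that effect helps. The harmful effect is that the subtracted term $\E_{\off(K\cup C)}[F(C_{x_r})]$ also depends on $K$ and rises when $K$ shrinks.

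Second, your comparison requires $K'\cup C'\supseteq K\cup C$, i.e.\ that $C'$ contains the part of $K$ that gets cut off. This is false: a vertex of $K$ may be linked to $X_{r-1}$ only through $C'\setminus C$ without lying in $C'$. In fact the bracket $b(K,C)=m(K)-\E_{\off(K\cup C)}[F(C_{x_r})]$ can decrease along your pairing $K\mapsto K'$. Take the following data:
\begin{itemize}
\item the edges $aw,wz,cw,x_rc,cu,x_rz,zu$, all with parameter $1/2$, and $X_{r-1}=\{a\}$;
\item $C=\{c\}$, $C'$ the cluster formed by the edge $cw$, and $F(B)=\1_{u\in B}$;
\item an outer configuration with $aw,wz$ open and $x_rz,zu$ closed.
\end{itemize}
Then $K=\{a,w,z\}$, $K'=\{a\}$, and $D_r$ holds in both cases. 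Yet $b(K,C)=1/4$ while $b(K',C')=15/32-1/4=7/32$: once $z$ is released from $K$, the path $x_rzu$ makes $C'$ less pivotal. So a comparison pointwise in the outer configuration, with an independent inner expectation, cannot work.

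The missing idea is to realise the inner expectation with the \emph{same} environment as the outer one. Write $K_\omega=C_{X_{r-1}}(\omega\off C)$. Conditionally on $K_\omega$, the edges of $\omega$ not incident to $K_\omega$ are still unexplored. Hence $\E_{\off C}[\1_{D_r}m(K)]=\E[\1_{D_r}(\omega\off C)\,F(C_{x_r}(\omega\off K_\omega))]$. Consequently $F_j(C)$ equals $\frac{\P[D_j]}{\P[D_r]}$ times the expectation of $\1_{D_r}(\omega\off C)\,[F(C_{x_r}(\omega\off K_\omega))-F(C_{x_r}(\omega\off C))]$.

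For each fixed $\omega$ this integrand is nonnegative and nondecreasing in $C$. Enlarging $C$ can only switch the indicator of $D_r$ from $0$ to $1$. It shrinks $K_\omega$ and hence enlarges $C_{x_r}(\omega\off K_\omega)$. It also shrinks $C_{x_r}(\omega\off C)$. In the example above, this coupling keeps $x_rz$ and $zu$ closed in the inner environment, which is precisely what your independent inner expectation loses.
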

\begin{proof}
	For all indices $i,j$, let $A(i,j)$ denote the event "$x_j$ is the first point in $C_{x_i}$".
	More explicitly,
	\[
		A(i,j) = \{x_i \leftrightarrow x_j ; x_i \not\leftrightarrow X_{j-1}\} = \{x_i \leftrightarrow x_j ; x_j \not\leftrightarrow X_{j-1}\}.
	\]
	Observe that 
	\begin{equation}\label{e:depliage}
		\cov_{D_r}\big(\1_{x_i \leftrightarrow x_r} , F(C_{x_r}) \big| C_{X_{r-1}} \big)
		=
		- \sum_{j>r} \cov_{D_r}\big(\1_{A(i,j)}, F(C_{x_r}) \big| C_{X_{r-1}} \big).
	\end{equation}
	Let us justify this. Recall that, under $\P_{D_r}$, $x_r$ is not connected to any point of $X_{r-1}$.
	If $x_i \in C_{X_{r-1}}$, then the indicators are constant and hence the covariances vanish	, which gives the equality.
	If $x_i \not\in C_{X_{r-1}}$, then the above equality follows from 
	\[
		\1 = \sum_{j \ge r} \1_{A(i,j)} = \1_{x_i \leftrightarrow x_r} + \sum_{j>r} \1_{A(i,j)}.
	\]
	Now fix $j>r$. We have
	\begin{align*}
		\E_{D_r}\Big[\cov_{D_r}\big(\1_{A(i,j)}, F(C_{x_r}) \big| C_{X_{r-1}} \big)\Big]
		& = \E_{D_r}\Big[\1_{A(i,j)}\big(F(C_{x_r}) - \E_{D_r}\big[F(C_{x_r}) \big| C_{X_{r-1}}\big]\big)\Big].
	\end{align*}
	We now pass from an expectation under $\P_{D_r}$ to an expectation under $\P_{D_j}$ using
	\[
		\{x_r \not\leftrightarrow X_{r-1}\} \cap A(i,j) = \{x_r \not\leftrightarrow X_{r-1}\} \cap \{x_j \not\leftrightarrow X_{j-1}\} \cap \{x_i \leftrightarrow x_j\}.
	\]
	Setting
	\[
		\rho_j =  \frac{\P[x_j \not\leftrightarrow X_{j-1}]}{\P[x_r \not\leftrightarrow X_{r-1}]},
	\]
	we obtain the identity
	\begin{align*}
		\E_{D_r}\Big[\cov_{D_r}\big(\1_{A(i,j)}, F(C_{x_r}) \big| C_{X_{r-1}} \big)\Big]
		& = \rho_j \E_{D_j}\Big[\1_{x_i \leftrightarrow x_j}\1_{x_r \not\leftrightarrow X_{r-1}} \big(F(C_{x_r}) - \E_{D_r}\big[F(C_{x_r}) \big| C_{X_{r-1}}\big]\big)\Big].
	\end{align*}
	We now condition on $C_{x_j}$ to obtain, using the definition of $F_j$ in \eqref{e:esp_cov_cond3},
	\begin{align*}
		& - \E_{D_r}\Big[\cov_{D_r}\big(\1_{A(i,j)}, F(C_{x_r}) \big| C_{X_{r-1}} \big)\Big] \\
		& = \E_{D_j}\Bigg[\1_{x_i \leftrightarrow x_j} \rho_j \E_{D_j}\Big[ \1_{x_r \not\leftrightarrow X_{r-1}} \big(\E_{D_r}\big[F(C_{x_r}) \big| C_{X_{r-1}}\big]- F(C_{x_r})\big) \Big| C_{x_j}\Big]\Bigg] \\
		& =  \E_{D_j}\big[\1_{x_i \leftrightarrow x_j} F_j(C_{x_j})\big].
	\end{align*}
	Combining the preceding equality with \eqref{e:depliage} yields \eqref{e:esp_cov_cond1}.
	
	
	Finally, the nonnegativity and monotonicity of $F_j$ are stated in Lemma 9 of \cite{Blanc-Renaudie}.
	Let us give a proof for completeness.
	Let $\omega$ and $\omega'$ be two independent copies of  the unconditioned environment.
	Using notation that is hopefully self-explanatory, we can rewrite the definition of $F_j$ as follows:
	\begin{align*}
		F_j(C) 
		& = \rho_j\E\Bigg[\big(\1-\1_{C_{X_{r-1}}(\omega \off C)}(x_r)\big) \Big(F\big(C_{x_r}(\omega' \off C_{X_{r-1}}(\omega \off C))\big) - F(C_{x_r}(\omega \off C))\Big)\Bigg].
	\end{align*}
	But exploring $C_{X_{r-1}}(\omega \off C)$ does not reveal the state of any edge off $C_{X_{r-1}}(\omega \off C)$.
	Therefore we can actually replace $\omega'$ by $\omega$, that is,
	\begin{align*}
		F_j(C) 
		& = \rho_j \E\Bigg[\big(\1-\1_{C_{X_{r-1}}(\omega \off C)}(x_r)\big) \Big(F\big(C_{x_r}(\omega \off C_{X_{r-1}}(\omega \off C))\big) - F(C_{x_r}(\omega \off C))\Big)\Bigg].
	\end{align*}
	In other words, we have coupled the randomness of the two nested expected values by using the same environment.
	It can now be checked that the integrand is nonnegative and nondecreasing in $C$ for each fixed $\omega$, which implies the result.
	
	Consider first nonnegativity. 
	Assume $\1-\1_{C_{X_{r-1}}(\omega \off C)}(x_r)=1$ (otherwise nonnegativity is trivial).
	This means that $x_r$ is not connected to $X_{r-1}$ in the environment $\omega \off C$.
	As $F$ is nondecreasing, it now suffices to prove 
	\[
		C_{x_r}(\omega \off C) \subset C_{x_r}(\omega \off C_{X_{r-1}}(\omega \off C)).
	\]
	Let $\pi$ be a path of open edges in $\omega \off C$ originating from $x_r$.
	It suffices to check that no vertex of $\pi$ is in $C_{X_{r-1}}(\omega \off C)$.
	But if this were the case, then $x_r$ and $X_{r-1}$ would be connected in the environment $\omega \off C$.
	
	Let us prove that the integrand is nondecreasing in $C$.
	As we know that the integrand is nonnegative and as $\1-\1_{C_{X_{r-1}}(\omega \off C)}(x_r)$ is nondecreasing in $C$,
	 it suffices to show that
	\[
	 	F\big(C_{x_r}(\omega \off C_{X_{r-1}}(\omega \off C))\big) - F(C_{x_r}(\omega \off C))
	\]
	is nondecreasing in $C$. 
	But increasing $C$ decreases $C_{X_{r-1}}(\omega \off C)$ and thus increases $C_{x_r}(\omega \off C_{X_{r-1}}(\omega \off C))$ and therefore increases the first term.
	The second term is handled similarly.
\end{proof}

Let $r \in \{1,\dots,n\}$.
Denote by $\hat\Omega$ the set of all edge configurations satisfying $D_r$.
We consider the discrete-time Markov chain on $\hat\Omega$ with the following transition rule.
Given the configuration $\widehat\omega^t$ at time $t$, obtain $\widehat\omega^{t+1}$ in two half-steps:
\begin{enumerate}
	\item Independently resample all edges with no endpoint in $C_{x_r}(\widehat\omega^t)$, each according to its original Bernoulli law, leaving all other edges unchanged.
	Denote the resulting configuration by $\widehat\omega^{t+\frac12}$.
	\item Independently resample all edges with no endpoint in $C_{X_{r-1}}(\widehat\omega^{t+\frac12})$,	each according to its original Bernoulli law, 	leaving all other edges unchanged.
	Denote the resulting configuration by $\widehat\omega^{t+1}$.
\end{enumerate}
This  is the Markov chain introduced by van den Berg, Häggström and Kahn \cite{BHK} to prove the BHK correlation inequality
(see their Section 2.1 where they work in the more general setting of random-cluster measures).
See also the review article by Häggström \cite{Haggstrom_cooking}.

The process $C_{x_r}(\widehat\omega^t)$ is still a Markov chain, as the knowledge of $C_{x_r}(\widehat\omega^t)$ is sufficient to 
perform the above two half-steps. 
This is a Markov chain on the state space of clusters containing $x_r$ and not touching $X_{r-1}$.
Let us call them $r$-admissible clusters.
Denote by $\cG_r$ the associated operator on real-valued functions $F$ of clusters. 
It satisfies
\[
	\cG_rF(C) =  \E_{\off C}\Big[ \E_{\off {C_{X_{r-1}}}} \big[ F(C_{x_r}) \big] \Big]
\]
for all $r$-admissible clusters $C$. 
Actually we only need the above operator.
Nevertheless we started the presentation with the Markov process on $\hat\Omega$ as we estimate it enlighten what happens.

\begin{lemma} \label{l:cov_esp_cond}
	Let $r \in \{1,\dots,n\}$ and let $F$ and $G$ be real-valued functions of clusters.
	Then, 
	\begin{equation} \label{e:cov_esp_cond1}
		\cov_{D_r}\Big(\E_{D_r}\big[F(C_{x_r}) \big|C_{X_{r-1}}\big] , \E_{D_r}\big[ G(C_{x_r}) \big| C_{X_{r-1}}\big]\Big)
		= \cov_{D_r}\Big(F(C_{x_r}), \cG_r G(C_{x_r})\Big).
	\end{equation}
	Moreover, for any $r$-admissible cluster $C$,
	the $t$-th iterate $\cG^t_r(F)(C)$ converges to $\E_{D_r}[F(C_{x_r})]$ as $t$ tends to infinity.
	Finally, if $F$ is nondecreasing and nonnegative, then $\cG_rF$ is also nondecreasing and nonnegative.
\end{lemma}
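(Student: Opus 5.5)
The plan is to prove the three assertions of Lemma \ref{l:cov_esp_cond} in order. All three rest on the two-half-step description of $\cG_r$ and on the identity \eqref{e:ex_off}.

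\textbf{Covariance identity.} Write $\Phi(C_{X_{r-1}}) = \E_{D_r}[F(C_{x_r}) | C_{X_{r-1}}] = \E_{\off C_{X_{r-1}}}[F(C_{x_r})]$, and similarly $\Psi$ for $G$. Both sides of \eqref{e:cov_esp_cond1} have the same product of means $\E_{D_r}[F(C_{x_r})]\E_{D_r}[G(C_{x_r})]$, by the tower property and because $\E_{D_r}[\cG_rG(C_{x_r})] = \E_{D_r}[G(C_{x_r})]$; the latter is the stationarity of the chain, proved next. So it suffices to show
\[
\E_{D_r}[\Phi\Psi] = \E_{D_r}[F(C_{x_r})\,\cG_rG(C_{x_r})].
\]
By the tower property, the left side equals $\E_{D_r}[F(C_{x_r})\Psi(C_{X_{r-1}})]$. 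Now condition on $C_{x_r}$ instead. By the symmetric version of \eqref{e:ex_off}, under $\P_{D_r}$ and given $C_{x_r}$, the law of $C_{X_{r-1}}$ is its law under $\P_{\off C_{x_r}}$, with the event $D_r$ automatically satisfied. Hence $\E_{D_r}[\Psi(C_{X_{r-1}}) | C_{x_r}] = \E_{\off C_{x_r}}[\E_{\off C_{X_{r-1}}}[G(C_{x_r})]] = \cG_rG(C_{x_r})$, which gives the claim.

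The same two conditional statements show that each half-step of the chain on $\hat\Omega$ preserves $\P_{D_r}$: resampling the edges with no endpoint in $C_{x_r}$ (resp.\ $C_{X_{r-1}}$) reproduces the conditional law given $C_{x_r}$ (resp.\ $C_{X_{r-1}}$). This yields the stationarity used above.

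\textbf{Convergence.} The chain $C_{x_r}(\widehat\omega^t)$ lives on the finite set of $r$-admissible clusters and has stationary law $\P_{D_r}(C_{x_r}\in\cdot)$, so it suffices to show it is irreducible and aperiodic. I will show that from any admissible $C$ the chain reaches the singleton cluster $\{x_r\}$ in one step with positive probability. In the first half-step, all edges off $C$ are resampled; with positive probability every such edge is closed, since $p_e<1$. Then $C_{X_{r-1}} = X_{r-1}$, because edges from $X_{r-1}$ to $C$ are closed given $D_r$, and edges between vertices of $X_{r-1}$ are off $C$. In the second half-step, all edges with no endpoint in $X_{r-1}$ are resampled; with positive probability they are all closed, so $C_{x_r}=\{x_r\}$. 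The same argument started from $\{x_r\}$ gives a positive self-loop at $\{x_r\}$, hence aperiodicity. Conversely, $\{x_r\}$ has positive stationary mass, and the stationary support is closed under transitions, so every admissible cluster in that support is reached. Since every admissible cluster has positive $\P_{D_r}$-probability, as $0<p_e<1$, the chain is irreducible. Standard finite Markov chain convergence then gives $\cG_r^tF(C)\to \E_{D_r}[F(C_{x_r})]$.

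\textbf{Monotonicity.} Nonnegativity of $\cG_rF$ is clear. For monotonicity, use the coupling trick from Lemma \ref{l:esp_cov_cond}: with a single environment $\omega$,
\[
\cG_rF(C)=\E\big[F\big(C_{x_r}(\omega\off C_{X_{r-1}}(\omega\off C))\big)\big].
\]
Replacing the second independent copy $\omega'$ by $\omega$ is justified because exploring $C_{X_{r-1}}$ reveals no edges off it. Enlarging $C$ shrinks $C_{X_{r-1}}(\omega\off C)$, hence enlarges $C_{x_r}(\omega\off C_{X_{r-1}}(\omega \off C))$, and $F$ is nondecreasing.

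\textbf{Main obstacle.} The delicate step is the conditional-law symmetry: given $C_{x_r}$ under $\P_{D_r}$, the cluster $C_{X_{r-1}}$ must be distributed as under $\P_{\off C_{x_r}}$. This requires checking that $D_r$ is exactly the event that $C_{x_r}$ avoids $X_{r-1}$, so that the conditioning on $D_r$ is absorbed into the conditioning on the admissible value of $C_{x_r}$.
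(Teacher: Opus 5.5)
Your proof follows the paper's: the covariance identity comes from the same two conditionings (tower property on $C_{X_{r-1}}$, then the symmetric form of \eqref{e:ex_off} given $C_{x_r}$), stationarity via the two half-steps is the paper's footnote, and your single-environment coupling correctly fills in the monotonicity that the paper calls elementary.

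One sentence in the convergence step is a non sequitur. You use that $\{x_r\}$ has positive stationary mass and that the stationary support is closed under transitions. These facts only show that everything reachable from $\{x_r\}$ lies in the support, which is the opposite of the inclusion you need.

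Replace it by the paper's direct observation. Starting from $\{x_r\}$, with positive probability the first half-step closes every edge not incident to $x_r$. This leaves the all-closed configuration, so $C_{X_{r-1}}=X_{r-1}$. The second half-step resamples every edge not incident to $X_{r-1}$. For any $r$-admissible $C'$, it opens exactly the edges of $C'$ with positive probability, so $C_{x_r}=C'$ after one step.

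Alternatively, your uniform one-step lower bound on the probability of jumping to $\{x_r\}$ is a Doeblin condition. It already gives convergence to the unique stationary law, so irreducibility is not needed at all.
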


\begin{proof}
	Identity \eqref{e:cov_esp_cond1} follows from the following computation:
	\begin{align*}
		\E_{D_r}\big[F(C_{x_r}) \cG_r G(C_{x_r})\big]
		& = \E_{D_r}\bigg[F(C_{x_r}) \E_{\off C_{x_r}}\Big[ \E_{\off {C_{X_{r-1}}}} \big[ G(C_{x_r}) \big] \Big] \bigg] \\
		& = \E_{D_r}\bigg[F(C_{x_r}) \E_{D_r}\Big[ \E_{\off {C_{X_{r-1}}}} \big[ G(C_{x_r}) \big]\Big| C_{x_r} \Big] \bigg] \\
		& = \E_{D_r}\bigg[F(C_{x_r})  \E_{\off {C_{X_{r-1}}}} \big[ G(C_{x_r}) \big]\bigg] \\	
		& = \E_{D_r}\bigg[F(C_{x_r}) \E_{D_r}\Big[G(C_{x_r}) | C_{X_{r-1}}\Big]\bigg] \\	
		& = \E_{D_r}\bigg[\E_{D_r}\Big[F(C_{x_r}) | C_{X_{r-1}}\Big] \E_{D_r}\Big[G(C_{x_r}) | C_{X_{r-1}}\Big]\bigg].
	\end{align*}
	We used properties such as \eqref{e:ex_off} in the second and fourth equalities.
	We used basic properties of conditional expectation in the third and fifth equalities.
	Taking $F=1$ also gives 
	\[
		\E_{D_r}\big[\cG_r G(C_{x_r})\big] = \E_{D_r}\big[ G(C_{x_r})\big].
	\]
	This yields \eqref{e:cov_esp_cond1}. 
	
	The previous equality also implies that the distribution of $C_{x_r}$ under $\P_{D_r}$ is invariant for the Markov chain
	associated with $\cG_r$. \footnote{One can also see it from the fact that $\P_{D_r}$ is invariant for the Markov chain on $\hat\Omega$
	introduced above the lemma.} But the Markov chain started from any $r$-admissible cluster
	converges in law to its stationary distribution 
	(for example because it is possible to go in one step from any $r$-admissible cluster to the trivial cluster $\{x_r\}$ 
	and from this one to any $r$-admissible cluster). This yields the second assertion of the Lemma.
	
	The last point is elementary.
\end{proof}

We are now ready to prove Proposition \ref{p:cov_cone}.
Recall that the second part follows from the first part (see the remark below Proposition \ref{p:cov_cone}).
The proof proceeds by downward induction on $r$.
The result is immediate for $r=n$ (see the remarks preceding Proposition \ref{p:cov_cone}).
Let $r<n$ be such that the result has been proved for $r' \in \{r+1,\dots,n\}$.
Let $F$ be a nonnegative nondecreasing function of clusters.
For every $t \ge 0$, $\cG_r^t F$ is nonnegative and nondecreasing by Lemma \ref{l:cov_esp_cond}.
By the law of total covariance, followed by Lemmas \ref{l:esp_cov_cond} and \ref{l:cov_esp_cond}, we have
(below we manipulate functions of the $x_i$'s)
\begin{align*}
	& \cov_{D_r}\big(1_{\cdot \leftrightarrow x_r}, \cG_r^t F(C_{x_r})\big) \\
	& = \E_{D_r}\Big[\cov_{D_r}\big(\1_{\cdot\leftrightarrow x_r}, \cG_r^t F(C_{x_r}) \big| C_{X_{r-1}}\big)\Big] 
		+  \cov_{D_r}\Big(\E_{D_r}\big[\1_{\cdot \leftrightarrow x_r} \big|C_{X_{r-1}}\big] , \E_{D_r}\big[ \cG_r^t F(C_{x_r}) \big| C_{X_{r-1}}\big]\Big) \\
		& = \sum_{j=r+1}^n \E_{D_j} \big[\1_{\cdot \leftrightarrow x_j} (\cG_r^t F)_j(C_{x_j})  \big] + \cov_{D_r}\Big(\1_{\cdot \leftrightarrow x_r}, \cG^{t+1}_r F(C_{x_r})\Big)
\end{align*}
and hence 
\begin{equation}\label{e:pre_tele}
	\cov_{D_r}\big(1_{\cdot \leftrightarrow x_r}, \cG_r^t F(C_{x_r})\big) - \cov_{D_r}\Big(\1_{\cdot \leftrightarrow x_r}, \cG^{t+1}_r F(C_{x_r})\Big)
	= \sum_{j=r+1}^n \E_{D_j} \big[\1_{\cdot \leftrightarrow x_j} (\cG_r^t F)_j(C_{x_j})  \big].
\end{equation}
Consider the term with index $j$ on the right-hand side.
The function $(\cG_r^t F)_j$ is nonnegative and nondecreasing by Lemmas  \ref{l:esp_cov_cond} and \ref{l:cov_esp_cond}.
By the induction hypothesis, the coordinates of the corresponding vector in the base $(\bm{h_1},\dots,\bm{h_n})$ are nonnegative.
Moreover (see the remarks above Proposition \ref{p:cov_cone}) its first $j-1$ coefficients vanish.
Therefore each term on the right-hand side belongs (identifying these functions of the $x_i$'s with column vectors) to the closed cone
\[
	K = \left\{ \sum_{k \ge r} \alpha_k \bm{h_k} \text{ for } \alpha_r, \dots, \alpha_n \ge 0\right\}.
\]
As a consequence, the left-hand side of \eqref{e:pre_tele} belongs to $K$.
Summing (the sum telescopes) we get, for all $t$,
\[
	\cov_{D_r}\big(1_{\cdot \leftrightarrow x_r},  F(C_{x_r})\big) - \cov_{D_r}\Big(\1_{\cdot \leftrightarrow x_r}, \cG^{t}_r F(C_{x_r})\Big) \in K.
\]
But the convergence of $\cG^t_r$ implies the convergence of the second covariance term to $0$.
Therefore
\[
	\cov_{D_r}\big(1_{\cdot \leftrightarrow x_r},  F(C_{x_r})\big) \in K.
\]
Moreover,
\[	
	\E_{D_r}\big[F(C_{x_r})\big] \bm{h_r} \in K
\]
as $F$ is nonnegative. Adding the last two displays yields the result.

\end{document}